\newtheorem{theorem}{Theorem}[section]
\newtheorem*{theorem*}{Theorem}
\newtheorem{corollary}[theorem]{Corollary}
\newtheorem*{corollary*}{Corollary}
\newtheorem{lemma}[theorem]{Lemma}
\newtheorem*{lemma*}{Lemma}
\newtheorem{proposition}[theorem]{Proposition}
\newtheorem*{proposition*}{Proposition}
\theoremstyle{definition}
\theoremstyle{remark}
\newtheorem{remark}[theorem]{Remark}
\newtheorem{example}[theorem]{Example}
\newcommand{\N}{\mathbb{N}}
\newcommand{\Z}{\mathbb{Z}}
\newcommand{\Q}{\mathbb{Q}}
\newcommand{\C}{\mathbb{C}}
\renewcommand{\P}{\mathbb{P}}
\newcommand{\cO}{\mathcal{O}}
\newcommand{\cP}{\mathcal{P}}
\newcommand{\fkp}{\mathfrak{p}}
\renewcommand{\leq}{\leqslant}
\renewcommand{\geq}{\geqslant}
\newcommand{\of}[1]{\left(#1\right)}
\newcommand{\set}[1]{\left\{#1\right\}}
\newcommand{\abs}[1]{\left\vert#1\right\vert}
\newcommand{\norm}[1]{\left\Vert#1\right\Vert}
\newcommand{\smat}[1]{\begin{smallmatrix}#1\end{smallmatrix}}
\newcommand{\on}{\operatorname}
\renewcommand{\Re}{\on{Re}}
\renewcommand{\Im}{\on{Im}}
\newcommand{\Gal}{\on{Gal}}
\begin{document}
\title{Analogues of Alladi's formula}
\author{Biao Wang}
\date{\today}
\maketitle

\begin{abstract}
	In this note, we mainly show the analogue of one of  Alladi's formulas over $\mathbb{Q}$ with respect to the Dirichlet convolutions involving the M\"{o}bius function $\mu(n)$, which is related to the natural densities of sets of primes by recent work of Dawsey, Sweeting and Woo, and Kural et al. This  would give us several new analogues. In particular, we get that if $(k, \ell)=1$, then
	$$-\sum_{\begin{smallmatrix}n\geq 2\\ p(n)\equiv \ell (\operatorname{mod} k)\end{smallmatrix}}\frac{\mu(n)}{\varphi(n)}=\frac1{\varphi(k)},$$
	where  $p(n)$ is the smallest prime divisor of $n$, and $\varphi(n)$ is Euler's totient function. This refines one of Hardy's formulas in 1921. At the end, we give some examples for the $\varphi(n)$ replaced by functions ``near $n$'', which include the sum-of-divisors function.
\end{abstract}

\section{Introduction and statement of results}

It is well known (e.g.,  \cite[(8.8)]{mv07}) that the prime number theorem is equivalent to the assertion that
\begin{equation}\label{pntmu}
	\sum_{n=1}^\infty\frac{\mu(n)}{n}=0,
\end{equation}
where $\mu(n)$ is  the M\"{o}bius function defined by $\mu(n)=(-1)^k$ if $n$ is the product of $k$ distinct primes and zero otherwise.  

In 1977, Alladi \cite{a77} rewrote (\ref{pntmu}) as
\begin{equation}\label{pnt}
	-\sum_{n=2}^\infty\frac{\mu(n)}{n}=1,
\end{equation}
and showed that if $(\ell,k)=1$, then
\begin{equation}\label{alladi}
	-\sum_{\smat{n\geq 2\\ p(n)\equiv \ell (\on{mod}k)}}\frac{\mu(n)}{n}=\frac1{\varphi(k)},
\end{equation}
where $p(n)$ is the smallest prime divisor of $n$, and $\varphi$ is Euler's totient function. 

Alladi's formula (\ref{alladi}) shows a relationship between the M\"{o}bius function $\mu(n)$ and the density of primes in arithmetic progressions.   In 2017, Dawsey \cite{d17} first generalized (\ref{alladi}) to the setting of Chebotarev densities for finite Galois extensions of $\Q$.  Then Sweeting and Woo \cite{sw19}  generalized   Dawsey's result to number fields.  Recently, Kural et al. \cite{kms19}  generalized  all these results to natural densities of sets of primes, see section \ref{secdensity}.  Over $\Q$, the result of Kural et al. is stated as follows.

Let $\cP$ be the set of all primes. Let $S\subseteq \cP$ be a subset of primes. If $S\subseteq \cP$ has a natural density $\delta(S)$, then
\begin{equation}\label{kmsmueq}
	-\sum_{\smat{n\geq 2\\ p(n)\in S}}\frac{\mu(n)}{n}=\delta(S).
\end{equation}
Here we say $S$ has \textit{natural density} $\delta(S)$ \cite{d17,kms19} if the following limit exits:
\begin{equation}
	\delta(S):=\lim_{x\to\infty} \frac{\pi_S(x)}{\pi(x)},
\end{equation}
where $\pi_S(x)=\#\set{p\in S: p\leq x}$, and $\pi(x)=\pi_\cP(x)$.  

For the arithmetic functions other than $\mu$, we \cite{w19,w19rm} showed the analogues of Alladi's and Dawsey's results with respect to the Liouville function and the Ramanujan sum.
In this note, we will unify these two analogues by showing the following analogue of  formula (\ref{kmsmueq}) with respect to the Dirichlet convolutions involving the M\"{o}bius function.  As a corollary, we get a new type of analogues of (\ref{kmsmueq}).

\begin{theorem}\label{mainthm} 
	Suppose $a:\N\to \C$ is an arithmetic function satisfying $a(1)=1$ and $\sum_{n=2}^\infty \frac{|a(n)|}{n}\log\log n<\infty$. Let $\mu*a$ be the Dirichlet convolution of $\mu$ and $a$.
	If $S\subseteq \cP$ has a natural density $\delta(S)$, then
	\begin{equation}\label{mainthmeq}
		-\sum_{\smat{n\geq 2\\ p(n)\in S}}\frac{\mu*a(n)}{n}=\delta(S).
	\end{equation}
\end{theorem}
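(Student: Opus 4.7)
The natural strategy is to expand the Dirichlet convolution $(\mu*a)(n)=\sum_{dm=n}\mu(d)a(m)$, interchange the order of summation, and reduce the problem to the Kural--McDonald--Sah formula (\ref{kmsmueq}). The truncated sum rewrites as
\[-\sum_{\smat{2\leq n\leq x \\ p(n)\in S}}\frac{(\mu*a)(n)}{n}=-\sum_{m\geq 1}\frac{a(m)}{m}\,A_m(x),\qquad A_m(x):=\sum_{\smat{d\leq x/m,\ dm\geq 2 \\ p(dm)\in S}}\frac{\mu(d)}{d},\]
and the plan has three steps: (i) compute $\lim_{x\to\infty}A_m(x)$ for each $m$; (ii) find a uniform-in-$x$ dominating bound $|A_m(x)|\leq B_m$ with $\sum_{m\geq 1}|a(m)|B_m/m<\infty$; (iii) conclude by dominated convergence.

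For (i), the $m=1$ sum is $-\sum_{2\leq d\leq x,\,p(d)\in S}\mu(d)/d\to -\delta(S)$ directly by (\ref{kmsmueq}), and since $a(1)=1$ this piece contributes $\delta(S)$ in the limit. For $m\geq 2$, set $q:=p(m)$; since $p(dm)=\min(p(d),q)$ I would decompose
\[A_m(x)=\mathbf{1}[q\in S]\sum_{\smat{d\leq x/m \\ d=1\text{ or }p(d)\geq q}}\frac{\mu(d)}{d} + \sum_{\smat{2\leq d\leq x/m \\ p(d)\in S\cap\{p<q\}}}\frac{\mu(d)}{d}.\]
The second sum tends to $0$ by (\ref{kmsmueq}) applied to the finite (hence density-$0$) set $S\cap\{p<q\}$. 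The first sum equals $\sum_{d\leq x/m}\mu(d)/d$ minus a Kural--McDonald--Sah sum over the finite set $\{p:p<q\}$, so it tends to $0$ by combining (\ref{pntmu}) with another application of (\ref{kmsmueq}). Hence $A_m(x)\to 0$ for every $m\geq 2$.

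The main technical step is (ii): a uniform bound of the shape $|A_m(x)|\ll \log\log m$, compatible with the weight in the hypothesis. I would derive this by feeding the classical estimate $|\sum_{n\leq y}\mu(n)/n|\leq 1$ into the decomposition above and controlling the resulting coprime M\"obius sums $\sum_{n\leq y,\,(n,e)=1}\mu(n)/n$ via inclusion--exclusion against the primes $<q=p(m)$ combined with Mertens' theorem. This is the main obstacle of the argument, and the $\log\log n$ weighting in the hypothesis is precisely what makes the resulting dominating series converge. Once the envelope is established, dominated convergence passes the limit through the $m$-sum, only the $m=1$ term survives, and the theorem follows with value $\delta(S)$.
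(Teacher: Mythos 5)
Your proposal is correct, and it takes a genuinely different (and arguably cleaner) route than the paper. Both proofs begin the same way — expand $\mu*a$, pull the $a$-variable to the outside, and reduce to M\"obius sums $\sum_{d\leq y,\,p(d)>q}\mu(d)/d$ — but from there they diverge. The paper proves an honest convolution analogue of Alladi's duality (Theorem \ref{mainthmf}) by splitting the outer $a(d)/d$ sum at an $x$-dependent threshold $x^{1/\alpha}$, $\alpha=(\log x)^{3/4}$: on the low range it invokes a quantitative bound $R(x,y)\ll\exp(-c\sqrt{\log x})$ (Lemma \ref{muupbd2}, which itself needs Alladi's estimate (\ref{mxy0}) and a summation-by-parts argument), while on the high range it uses only Tao's bound $|R|\leq 1$ and the convergence of $\sum|a(d)|\log\log d/d$. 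You instead keep $x$ out of the splitting: you compute the pointwise limit $A_m(x)\to -\delta(S)\mathbf{1}[m=1]$ for each fixed $m$ using (\ref{kmsmueq}) applied to the finite sets $S\cap\{p<q\}$ and $\{p<q\}$, and then close the argument with a single uniform-in-$x$ envelope $|A_m(x)|\ll 1+\sum_{p<p(m)}1/p\ll\log\log m$ derived from just $|R|\leq 1$ and Mertens, followed by dominated convergence. This sidesteps Lemma \ref{muupbd2} and the $(\log x)^{3/4}$ threshold entirely, at the cost of giving no rate of convergence; the paper's argument, being more quantitative on the low range (though the tail $S_3$ limits the global rate anyway), is the version you would want if you cared about error terms. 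Your envelope bound in step (ii) is exactly the mechanism behind the paper's estimate (\ref{pfs3-2}), so the hypothesis $\sum|a(n)|\log\log n/n<\infty$ enters for the same reason in both proofs. One small remark: writing the two inner sums in your decomposition as $R(x/m,\,p(m)-1)$ and $-\sum_{p<p(m)}p^{-1}R\bigl(x/(pm),\,p\bigr)$ makes the appeal to Tao's $|R|\leq 1$ and to $R(y,p)\to 0$ (equation (\ref{ypnt})) completely mechanical and is cleaner than running inclusion--exclusion against $\prod_{p<q}p$ directly.
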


\begin{corollary}\label{maincor}
	Suppose $a:\N\to \C$ is an arithmetic function satisfying $a(1)=1$ and $|a(n)|\ll{n^{-\alpha}}$ for some $\alpha>0$.
	If $S\subseteq \cP$ has a natural density $\delta(S)$, then
	\begin{equation}\label{maincoreq}
		-\sum_{\smat{n\geq 2\\ p(n)\in S}}\frac{\mu*a(n)}{\varphi(n)}=\delta(S).
	\end{equation}
	
	In particular, for any integer $m\geq1$,  if  $(\ell,k)=1$, then
	\begin{align}
		-\sum_{\smat{n\geq 2\\ p(n)\equiv \ell (\on{mod}k)}}\frac{\mu(n)}{\varphi(n)}&=\frac1{\varphi(k)}, \label{maincoreq1}\\
		-\sum_{\smat{n\geq 2\\ p(n)\equiv \ell (\on{mod}k)}}\frac{c_n(m)}{\varphi(n)}&=\frac1{\varphi(k)}.\label{maincoreq2}
	\end{align}
	Here $c_n(m)=\sum_{\smat{1\leq q\leq n\\(q,n)=1}}e^{\frac{2\pi i qm}n}$ is the Ramanujan sum.
\end{corollary}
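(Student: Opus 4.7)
The plan is to deduce Corollary~\ref{maincor} from Theorem~\ref{mainthm} by absorbing the factor $n/\varphi(n)$ into a modified arithmetic function $a'$. Given $a$ as in the corollary, define
$$a'(n) := \sum_{d\mid n}\frac{d\,(\mu*a)(d)}{\varphi(d)},$$
equivalently $a' = \mathbf{1}*\bigl((\mu*a)\cdot\tfrac{\mathrm{id}}{\varphi}\bigr)$, where $\mathbf{1}(n)=1$ and $\mathrm{id}(n)=n$. Convolving with $\mu$ and using $\mu*\mathbf{1}=\varepsilon$, one obtains $(\mu*a')(n) = (\mu*a)(n)\cdot n/\varphi(n)$, hence
$$\frac{(\mu*a')(n)}{n} \;=\; \frac{(\mu*a)(n)}{\varphi(n)},$$
and $a'(1)=a(1)=1$. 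If $a'$ satisfies the hypothesis of Theorem~\ref{mainthm}, applying the theorem to $a'$ immediately yields (\ref{maincoreq}).

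The main technical step is then to verify $\sum_{n\geq 2}|a'(n)|\log\log n/n<\infty$. The hypothesis $|a(n)|\ll n^{-\alpha}$ gives $|(\mu*a)(n)|\leq\sum_{d\mid n}|a(n/d)|\ll\sum_{d\mid n}(n/d)^{-\alpha}$, which is a multiplicative majorant, and Mertens's bound furnishes $n/\varphi(n)\ll\log\log n$ for $n\geq 3$. Bounding $|a'(n)|\leq\sum_{d\mid n}d|(\mu*a)(d)|/\varphi(d)$ term-by-term and passing to a majorizing Euler product, one shows that $\sum|a'(n)|/n^s$ converges for $\operatorname{Re} s>1-\beta$ for some $\beta=\beta(\alpha)>0$. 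Since $\log\log n\ll n^{\delta}$ for every $\delta>0$, the required convergence at $s=1$ with a $\log\log n$ weight follows at once.

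The specializations are then immediate. For (\ref{maincoreq1}), take $a=\varepsilon$, the Dirichlet identity, so that $\mu*a=\mu$ and the decay hypothesis is trivial; with $S=\{p\in\cP:p\equiv\ell\pmod k\}$, Dirichlet's theorem gives $\delta(S)=1/\varphi(k)$. For (\ref{maincoreq2}), take $a(n)=n\cdot[n\mid m]$, so the convolution identity $(\mu*a)(n)=\sum_{e\mid(n,m)}e\mu(n/e)=c_n(m)$ recovers the Ramanujan sum, and the decay hypothesis holds trivially as $a$ has finite support.

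The main obstacle lies in the second paragraph: on integers with many small prime factors $n/\varphi(n)$ spikes, so one must verify that the interplay of the polynomial decay of $a$ with the multiplicative majorant of $|\mu*a|$ dominates this growth uniformly. The Euler-product factorization of the majorizing Dirichlet series for $|a'|$ is the essential device for this.
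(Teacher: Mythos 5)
Your reduction is exactly the paper's: your $a'$ is the paper's $b(n)=\sum_{d\mid n}(\mu*a)(d)\,d/\varphi(d)$, and M\"obius inversion then recasts (\ref{maincoreq}) as an instance of Theorem~\ref{mainthm}. The two specializations ($a=\varepsilon$ giving $\mu$, and $a(n)=n\,[n\mid m]$ giving $c_n(m)$) are also correct.

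However, the convergence step has a genuine gap. The term-by-term bound
$$|a'(n)|\;\le\;\sum_{d\mid n}\frac{d\,|(\mu*a)(d)|}{\varphi(d)}$$
is too lossy and in fact \emph{cannot} give convergence of $\sum|a'(n)|/n^s$ for any $s\le 3/2$, let alone for $\Re s>1-\beta$. The problem is that $d/\varphi(d)\ge 1$ for every $d$, so the right-hand side only magnifies. Take the crucial special case $a=\varepsilon$, $\mu*a=\mu$: your majorant is $\sum_{d\mid n}\mu^2(d)\,d/\varphi(d)\ge\sum_{d\mid n}\mu^2(d)=2^{\omega(n)}$, whose Dirichlet series is $\zeta(s)^2/\zeta(2s)$ with a double pole at $s=1$; hence $\sum_n(\text{your majorant})/n$ already diverges. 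The true $a'=b$ is far smaller: a direct computation (which is the heart of the paper's proof) shows $b(p^k)=-1/(p-1)$ for all $k\ge1$ when $a=\varepsilon$, i.e.\ there is massive cancellation between the $\mu(d)>0$ and $\mu(d)<0$ terms. The paper exhibits this cancellation explicitly: after interchanging sums and using $\sum_{d_1\mid(e,m)}\mu(d_1)=\delta_{1,(e,m)}$, one finds
$$b(n)=\sum_{\substack{e\mid n\\ (e,\,n/e)=1}}\frac{e}{\varphi(e)}\,a(e)\prod_{p\mid n/e}\frac{1}{1-p},$$
where the factor $\prod_{p\mid n/e}\frac{1}{p-1}$ is $\le 1$ and typically decays, whereas the naive per-term factor $(n/e)/\varphi(n/e)$ grows. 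Replacing your second paragraph with this exact evaluation of $a'(n)$ (or with the equivalent \emph{signed} Euler-product computation of $a'$ on prime powers, rather than a majorizing one) is the missing step; the rest of your argument then goes through.
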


\begin{remark} In 1921, Hardy \cite[(8.1), (8.2)]{h1921} showed that  $\sum_{n=1}^\infty\frac{\mu(n)}{\varphi(n)}=0$ and $\sum_{n=1}^\infty\frac{c_n(m)}{\varphi(n)}=0$. Therefore, (\ref{maincoreq1}) and (\ref{maincoreq2}) is a refinement of Hardy's formula. 
\end{remark}

As an application of (\ref{maincoreq}), we get a new  analogue of Dawsey's result \cite{d17}.

\begin{corollary}\label{dawseyphi}
	Let $m\geq 1$ be an integer. Let $K$ be a finite Galois extension of $\Q$ with Galois group $G = \Gal(K/\Q)$. Then for any conjugacy class $C\subseteq G$, we have 
	\begin{equation}\label{dawseyphieq}
		-\sum_{\smat{n\geq 2\\ \left[\frac{K/\Q}{p(n)}\right]=C}}\frac{c_n(m)}{\varphi(n)}=\frac{|C|}{|G|}.
	\end{equation}
	Here the notation under the sum is defined by $$\left[\frac{K/\Q}{p}\right]:=\set{\left[\frac{K/\Q}{\fkp}\right]: \fkp \subseteq\cO_K  \text{ and } \fkp|p}$$ 
	for any unramified prime $p$, where $\cO_K$ denotes the ring of integers in $K$, $\fkp$ denotes a prime ideal in $\cO_K$, and $\left[\frac{K/\Q}{\fkp}\right]$ denotes the Artin symbol for the Frobenius map.
\end{corollary}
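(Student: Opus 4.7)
The strategy is to reduce Corollary \ref{dawseyphi} to Corollary \ref{maincor} (specifically equation (\ref{maincoreq})) by choosing $S$ to be the Chebotarev set attached to $C$ and by exhibiting an arithmetic function $a$ whose Dirichlet convolution with $\mu$ recovers the Ramanujan sum $c_n(m)$.

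First, I would let $S$ denote the set of rational primes $p$ that are unramified in $K/\Q$ and satisfy $\left[\frac{K/\Q}{p}\right] = C$. The Chebotarev density theorem over $\Q$ gives $\delta(S) = |C|/|G|$; the finitely many ramified primes do not affect the natural density, nor do they affect the sum in (\ref{dawseyphieq}) beyond finitely many terms (so the indexing in the statement agrees with ``$p(n)\in S$'').

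Next, I would use the classical identity
$$c_n(m) = \sum_{d\mid \gcd(m,n)} d\,\mu(n/d)$$
to define $a\colon \N\to\C$ by $a(d) = d$ when $d\mid m$ and $a(d)=0$ otherwise. Then $a(1)=1$, and because $a$ is supported on the finite set of divisors of $m$ one has $|a(n)|\leq m^{1+\alpha}\, n^{-\alpha}$ for every $\alpha>0$, so the decay hypothesis of Corollary \ref{maincor} is satisfied. By construction $(\mu * a)(n) = c_n(m)$ for all $n\geq 1$.

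Finally, I would apply (\ref{maincoreq}) to this pair $(a,S)$: the left-hand side becomes exactly $-\sum \frac{c_n(m)}{\varphi(n)}$ with the Chebotarev indexing of (\ref{dawseyphieq}), while the right-hand side is $\delta(S) = |C|/|G|$. There is no serious obstacle; the only piece of content beyond unpacking definitions is the recognition of $c_n(m)$ as the convolution $\mu*a$ with an $a$ of finite support, after which Chebotarev plus Corollary \ref{maincor} gives the result immediately.
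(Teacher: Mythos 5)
Your proposal is correct and follows essentially the same route as the paper: apply Corollary \ref{maincor} with $S = S_C$ where $\delta(S_C)=|C|/|G|$ by Chebotarev, and with the $a$ supported on divisors of $m$ that satisfies $\mu*a(n)=c_n(m)$. The only cosmetic difference is that you identify $a$ from the classical convolution identity $c_n(m)=\sum_{d\mid(m,n)}d\,\mu(n/d)$, whereas the paper reads it off from Hardy's Dirichlet-series identity $\sum_n c_n(m)n^{-s}=\sigma_{1-s}(m)/\zeta(s)$; these are equivalent.
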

\begin{remark}
	Dawsey's result in \cite{d17} is
	\begin{equation}\label{dawseymueq}
		-\sum_{\smat{n\geq 2\\ \left[\frac{K/\Q}{p(n)}\right]=C}}\frac{\mu(n)}{n}=\frac{|C|}{|G|}.
	\end{equation}
\end{remark}

For the proof of Theorem \ref{mainthm}, we follow the approach of proving Theorem 1 in \cite{w19rm}. Then we apply Theorem \ref{mainthm} to show (\ref{maincoreq}). For (\ref{maincoreq1})-(\ref{dawseyphieq}), we will explain how to derive them from (\ref{maincoreq}) in section \ref{secdensity}. At the end, we  give some examples for the $\varphi(n)$ replaced by  functions ``near $n$". We leave the investigation of generalizing of these results to number fields  to interested readers.

\section{Examples of natural densities}\label{secdensity}

In this section, we give two examples of natural densities and explain how they are used to derive the analogues of Alladi's and Dawsey's results from (\ref{mainthmeq}) or (\ref{maincoreq}). We refer interested readers to \cite{kms19} for more interesting examples. To apply Theorem \ref{mainthm} and Corollary \ref{maincor}, notice that
\begin{equation}
	\sum_{n=1}^\infty\frac{\mu*a(n)}{n^s}=\frac{f(s)}{\zeta(s)}
\end{equation}
for $\Re s>1$, 
where  $\zeta(s)=\sum_{n=1}^{\infty}{n^{-s}}$ for $\Re s>1$ is the Riemann zeta function, and $f(s)=\sum_{n=1}^{\infty}a(n){n^{-s}}$ is the Dirichlet series of $a(n)$. Thus, to get (\ref{mainthmeq}) or (\ref{maincoreq}) for an arithmetic function, it suffices to find out its Dirichlet series, write it as ${f(s)}/{\zeta(s)}$, and check the corresponding convergence conditions on the Dirichlet series of $f(s)$.

\begin{example}
	Let $k,\ell\geq1$ be integers with $(k,\ell)=1$.  Let $$S_{k, \ell}=\set{p\in\cP: p\equiv \ell  (\on{mod }k)}.$$
	Then by the prime number theorem in arithmetic progressions, we have
	$$\delta(S_{k, \ell})=\frac1{\varphi(k)}.$$	
	Hence, Alladi's formula (\ref{alladi}) follows by taking $S=S_{k, \ell}$ in (\ref{kmsmueq}).
	
	In Corollary \ref{maincor}, take $a(1)=1, a(n)=0$ for $n\geq 2$. Then $\mu*a=\mu$. By (\ref{maincoreq}), we have
	\begin{equation}\label{phimueq}
		-\sum_{\smat{n\geq 2\\ p(n)\in S}}\frac{\mu(n)}{\varphi(n)}=\delta(S).
	\end{equation}
	Hence,  (\ref{maincoreq1}) follows  by taking $S=S_{k, \ell}$ in (\ref{phimueq}).
	
	For (\ref{maincoreq2}), notice that the Dirichlet series of $c_n(m)$ is
	\begin{equation}\label{ramads}
		\sum_{n=1}^\infty\frac{c_n(m)}{n^s}=\frac{\sigma_{1-s}(m)}{\zeta(s)},
	\end{equation}
	where $\sigma_s(n)=\sum_{d|n}d^s$ is the sum-of-divisors function of exponent $s$. (For the proof of (\ref{ramads}), see \cite[(1.3)]{h1921}.) Then $f(s)=\sum_{n=1}^\infty{a(n)}{n^{-s}}=\sigma_{1-s}(m)$. It follows that $a(n)=0$ for all $n> m$. By (\ref{maincoreq}), 	we have  that
	\begin{equation}\label{ramaphi}
		-\sum_{\smat{n\geq 2\\ p(n)\in S}}\frac{c_n(m)}{\varphi(n)}=\delta(S).
	\end{equation} 
	Hence, (\ref{maincoreq2}) follows immediately by by taking $S=S_{k, \ell}$ in (\ref{ramaphi}).

	Note that (\ref{maincoreq1}) is the case $m=1$ in (\ref{maincoreq2}) due to the fact that $\mu(n)=c_n(1)$ by \cite[Theorem 271]{hw08}.
\end{example}

\begin{example}
	Let $K$ be a finite Galois extension of $\Q$, and let $G = \Gal(K/\Q)$ be the Galois group. 
	For any conjugacy class $C\subset G$, let
	$$S_C=\set{p\in \cP: p \text{ unramified}, \left[\frac{K/\Q}{p}\right]=C}.$$
	
	Then  Chebotarev density theorem \cite{t26} says that $$\pi_{S_C}(x)=\frac{|C|}{|G|}\frac{x}{\log x}+o(\frac{x}{\log x}),$$
	which gives us that $$\delta(S_C)=\frac{|C|}{|G|}.$$
	Hence, Dawsey's result (\ref{dawseymueq}) follows by taking $S=S_C$ in (\ref{kmsmueq}), and (\ref{dawseyphieq}) follows by taking $S=S_C$ in (\ref{ramaphi}).
	
	Note that as \cite{d17}, (\ref{maincoreq2}) also follows by (\ref{dawseyphieq}), if one takes $K=\Q(\zeta_k)$ where $\zeta_k$ is the $k$-th primitive unit root and $C$ the conjugacy class of $\ell$. 
	
	By Theorem \ref{mainthm} and the fact that $\sum_{n=1}^\infty{\lambda(n)}{n^{-s}}=\frac{\zeta(2s)}{\zeta(s)}$, we have
	\begin{align}
		-\sum_{\smat{n\geq 2\\ p(n)\in S}}\frac{\lambda(n)}{n}&=\delta(S),\label{liouville19}\\
		-\sum_{\smat{n\geq 2\\ p(n)\in S}}\frac{c_n(m)}{n}&=\delta(S),\label{rama19}
	\end{align}
	where $\lambda(n)=(-1)^{\Omega(n)}$ is the Liouville function, and $\Omega(n)=\sum_{p^\alpha||n}\alpha$.
	
	Hence, our results in \cite{w19,w19rm} follow by taking $S=S_C$ in (\ref{liouville19}) and (\ref{rama19}), respectively.

\end{example}

In the following three sections, we are devoted to proving Theorem \ref{mainthm} and Corollary \ref{maincor}.

\section{Duality between prime factors}

To prove Theorem \ref{mainthm}, we require an analogue of the following  Alladi's theorem with respect to $\mu*a$, which reveals a duality relationship between $P(n)$ and $p(n)$, where $P(n)$ is the largest prime divisor of $n$.  It is an intermediate result that can convert the density properties of $P(n)$ into the desired sum (\ref{mainthmeq}). We set $P(1)=1$ but $p(1)=\infty$ for convenience.

\begin{theorem}[{\cite[Theorem 6]{a77}}] \label{mainthmmuf}
	For any bounded function $f$ and constant $\delta$, we have
	\begin{equation}
		\sum_{n\leq x}f(P(n))\sim \delta\cdot x
	\end{equation}
	if and only if
	\begin{equation}\label{mainthmfmueq}
		-\sum_{n=2}^\infty \frac{\mu(n)}{n}f(p(n))=\delta.
	\end{equation}
\end{theorem}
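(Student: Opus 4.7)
The plan is to prove the equivalence via a combinatorial duality identity that converts the largest prime factor $P(n)$ into a Möbius-weighted sum involving the smallest prime factors $p(d)$ of divisors $d \mid n$. Concretely, I would first establish that for every $n \geq 2$,
\begin{equation}\label{dualityid}
f(P(n)) = -\sum_{\substack{d \mid n \\ d > 1}} \mu(d)\, f(p(d)).
\end{equation}
Since $\mu$ is supported on squarefree integers, only squarefree divisors $d > 1$ contribute. Writing the distinct prime factors of $n$ as $p_1 < \cdots < p_k$, a squarefree $d$ with $p(d) = p_i$ corresponds to a subset of $\{p_i, \ldots, p_k\}$ containing $p_i$. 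The sign $\mu(d) = (-1)^{\omega(d)}$ combined with the elementary identity $\sum_{T \subseteq \{p_{i+1},\ldots,p_k\}} (-1)^{|T|} = 0$ for $i < k$ forces all contributions to cancel, except for $i = k$ (i.e.\ $d = p_k$) which yields $-f(P(n))$.

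Summing \eqref{dualityid} over $2 \leq n \leq x$ and interchanging the order of summation gives
$$\sum_{2 \leq n \leq x} f(P(n)) = -\sum_{2 \leq d \leq x} \mu(d)\, f(p(d)) \left\lfloor \frac{x}{d} \right\rfloor.$$
Writing $\lfloor x/d \rfloor = x/d - \{x/d\}$, the dominant contribution is $-x \sum_{2 \leq d \leq x} \mu(d) f(p(d))/d$. If the series $-\sum_{d \geq 2} \mu(d) f(p(d))/d$ equals $\delta$, the main term contributes $\delta x$, yielding the desired asymptotic. Conversely, inverting the relation through partial summation combined with a Tauberian-style passage to the limit extracts the series value from the density hypothesis $\sum_{n \leq x} f(P(n)) \sim \delta x$.

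The main obstacle is controlling the error term $\sum_{d \leq x} |\mu(d) f(p(d))| \{x/d\}$, whose crude bound $\|f\|_\infty \cdot (6/\pi^2) x$ is of the same order as the main term; genuine cancellation from $\mu$ must be exploited. To handle this I would split the range at a parameter $y = y(x)$ with $1 \ll y \ll x$: for $d \leq y$ I invoke the Mertens-type estimate $\sum_{d \leq y} \mu(d)/d = o(1)$ (equivalent to the prime number theorem) to upgrade the trivial bound to something genuinely smaller than $x$, while for $d > y$ I use Cauchy's criterion on the convergent tail $\sum_{d > y} \mu(d) f(p(d))/d$ combined with the trivial $\{x/d\} \leq 1$. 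A judicious choice of $y$ then pushes the total error to $o(x)$. For the converse direction, the same identity is used, but now one reconstructs $\sum \mu(d) f(p(d))/d$ from the asymptotic of $\sum_{n \leq x} f(P(n))$ by iterating the identity or by a Dirichlet hyperbola argument; the subtle point is that the conditional convergence of the target series must be established simultaneously with the identification of its sum.
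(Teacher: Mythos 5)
Your starting point — the combinatorial duality $f(P(n)) = -\sum_{d\mid n,\, d>1}\mu(d)\,f(p(d))$ and the resulting exchange $\sum_{2\leq n\leq x} f(P(n)) = -\sum_{2\leq d\leq x}\mu(d)f(p(d))\lfloor x/d\rfloor$ — is the correct and indeed the central combinatorial input (it is Alladi's duality lemma, which the paper cites rather than reproves). The difficulty, as you recognize, is entirely in the error analysis after writing $\lfloor x/d\rfloor = x/d - \{x/d\}$; but the strategy you propose for that error analysis does not work, and this is a genuine gap.

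Concretely, set $E(x)=\sum_{2\leq d\leq x}\mu(d)f(p(d))\{x/d\}$. In the range $d\leq y$, the Mertens estimate $\sum_{d\leq y}\mu(d)/d = o(1)$ gives no leverage: the weights attached to $\mu(d)f(p(d))$ are the fractional parts $\{x/d\}$, which are neither $1/d$ nor amenable to partial summation against the Mertens partial sums, so you cannot "upgrade" anything here. (Ironically, in this range the trivial bound $|E_1|\leq \|f\|_\infty\, y$ already suffices once $y=o(x)$ — you don't need Mertens at all there.) The real problem is the range $d>y$: replacing $\{x/d\}$ by the trivial bound $1$ destroys exactly the sign cancellation you need, producing $\sum_{y<d\leq x}|f(p(d))|\asymp \|f\|_\infty\, x$, not a tail of a convergent series. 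The quantity $\sum_{d>y}\mu(d)f(p(d))/d$ being small (Cauchy) is simply a different sum, with a different weight profile, from $\sum_{y<d\leq x}\mu(d)f(p(d))\{x/d\}$, and there is no passage from one to the other. No choice of $y$ rescues this decomposition.

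What actually makes the argument go through — in Alladi's proof of Theorem~6 in \cite{a77}, and in the machinery this paper sets up in Lemmas~\ref{muupbd1} and~\ref{muupbd2} for the more general Theorem~\ref{mainthmf} — is a \emph{grouping by smallest prime factor} rather than a dyadic split in $d$. One writes the sum over $d$ with $p(d)=p$ as $-f(p)\sum_{m\leq x/p,\, p(m)>p}\mu(m)\,\lfloor x/(pm)\rfloor$ (or in the paper's normalization, in terms of $R(x/p,p)$), and then uses the power-saving estimate $\sum_{n\leq x,\, p(n)>y}\mu(n)\ll x\exp(-c\sqrt{\log x})$ for $y\leq e^{\sqrt{\log x}}$ together with the uniform bound $|R(x,y)|\leq 1$. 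The crucial point is that the restricted Mertens-type sums $\sum_{n\leq x,\, p(n)>y}\mu(n)$ enjoy genuine cancellation that the unrestricted split at a parameter $y$ never sees. Your proposal also leaves the converse direction (asymptotic $\Rightarrow$ series value, including establishing conditional convergence) essentially unaddressed beyond a one-line gesture at "a Dirichlet hyperbola argument"; this direction requires the same restricted-$p(n)$ estimates and a careful Abel summation, as you can see from how the analogous bounds are deployed in the paper's proof of the $\mu*a$ generalization.
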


\begin{theorem} \label{mainthmf}
	Let $a$ be an arithmetic function as Theorem \ref{mainthm}. Then for any bounded function $f$ and constant $\delta$, we have
	\begin{equation}
		\sum_{n\leq x}f(P(n))\sim \delta\cdot x
	\end{equation}
	if and only if
	\begin{equation}\label{mainthmfeq}
		-\sum_{n=2}^\infty \frac{\mu*a(n)}{n}f(p(n))=\delta.
	\end{equation}
\end{theorem}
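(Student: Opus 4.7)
The plan is to deduce Theorem \ref{mainthmf} from Alladi's Theorem \ref{mainthmmuf} by proving that, under the hypothesis on $a$, the two series $-\sum_{n\geq 2}\mu(n)/n\cdot f(p(n))$ and $-\sum_{n\geq 2}(\mu*a)(n)/n\cdot f(p(n))$ converge together and to the same value. Expanding $(\mu*a)(n)=\sum_{de=n}\mu(d)a(e)$, interchanging the order of summation, and using $a(1)=1$, the $N$-th partial sum splits as
\begin{equation*}
\sum_{n=2}^{N}\frac{(\mu*a)(n)}{n}f(p(n))=\sum_{d=2}^{N}\frac{\mu(d)}{d}f(p(d))+\sum_{e=2}^{N}\frac{a(e)}{e}\,\Sigma\!\left(\tfrac{N}{e};e\right),
\end{equation*}
where $\Sigma(y;e):=\sum_{d\leq y}\mu(d)/d\cdot f(p(de))$. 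It therefore suffices to show that $E(N):=\sum_{e=2}^{N}a(e)/e\cdot\Sigma(N/e;e)$ tends to $0$ as $N\to\infty$.

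The key step is to prove that the limiting inner sum $\Sigma(\infty;e)$ vanishes for every $e\geq 2$. Fix such an $e$, let $P_e=p(e)$, and define $\tilde f(q):=f(\min(q,P_e))$, extended by $\tilde f(\infty):=f(P_e)$. Since $p(de)=\min(p(d),P_e)$ for every $d\geq 1$, one has $\Sigma(y;e)=f(P_e)+\sum_{2\leq d\leq y}\mu(d)/d\cdot\tilde f(p(d))$. The fact that $P_e$-smooth integers have vanishing natural density yields the asymptotic $\sum_{n\leq x}\tilde f(P(n))=f(P_e)\cdot x+o(x)$, so Alladi's Theorem \ref{mainthmmuf} applied to the bounded function $\tilde f$ (with $\delta$ replaced by $f(P_e)$) gives $-\sum_{d\geq 2}\mu(d)/d\cdot\tilde f(p(d))=f(P_e)$, and hence $\Sigma(\infty;e)=0$.

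To pass from pointwise vanishing $\Sigma(N/e;e)\to 0$ to $E(N)\to 0$, I intend to apply dominated convergence in $e$, which requires a uniform bound of the shape
\begin{equation*}
|\Sigma(y;e)|\;\leq\;C\,\|f\|_\infty\,\log\log(e+3)\qquad(y\geq 1,\;e\geq 2).
\end{equation*}
Coupled with the hypothesis $\sum_{e\geq 2}|a(e)|/e\cdot\log\log e<\infty$, this provides a summable majorant $C\|f\|_\infty|a(e)|/e\cdot\log\log(e+3)$. The bound itself will be derived from the splitting
\begin{equation*}
\Sigma(y;e)=f(P_e)\sum_{\substack{d\leq y\\ p(d)\geq P_e}}\frac{\mu(d)}{d}+\sum_{q<P_e}f(q)\sum_{\substack{d\leq y\\ p(d)=q}}\frac{\mu(d)}{d}
\end{equation*}
together with standard estimates for the restricted M\"obius sums via the prime number theorem and Mertens' theorem $\sum_{q<P}1/q=\log\log P+O(1)$, using $P_e\leq e$.

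The main obstacle is precisely this uniform bound: one needs $\sum_{d\leq y,\,(d,m)=1}\mu(d)/d$ to be controlled by an absolute constant (times $\log\log$ factors) independently of both $m$ and $y$, and this is exactly what forces the $\log\log$ weight in the hypothesis on $a$. Once it is established, dominated convergence forces $E(N)\to 0$, and the chain of equivalences from Alladi's Theorem \ref{mainthmmuf} closes the proof: $\sum_{n\leq x}f(P(n))\sim\delta x$ iff $-\sum_{n\geq 2}\mu(n)/n\cdot f(p(n))=\delta$ iff $-\sum_{n\geq 2}(\mu*a)(n)/n\cdot f(p(n))=\delta$.
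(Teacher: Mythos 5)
Your proof is correct and it closes the same gap the paper closes (transfer from $\mu$ to $\mu*a$ and then invoke Alladi's Theorem~\ref{mainthmmuf}), but it follows a genuinely softer route. You and the author both begin by interchanging the order of summation to isolate the pure $\mu$-sum plus the remainder indexed by $e\ge 2$, and both of you split the inner sum $\Sigma(y;e)$ by whether $p(d)\ge P_e$ or $p(d)<P_e$, which is exactly the paper's identity (\ref{pfeq2}) in terms of the quantity $R(x,y)$.

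Where you diverge is in how you kill the remainder. The paper cuts the range $2\le e\le x$ at $x^{1/\alpha}$ with $\alpha=(\log x)^{3/4}$: on the short range it invokes the quantitative decay bound Lemma~\ref{muupbd2} (which in turn rests on Alladi's 1982 estimate (3.5)), and on the long range it applies the crude bound $|\Sigma(y;e)|\ll\log\log e$ (Lemma~\ref{muupbd1} plus Mertens) together with convergence of $\sum |a(e)|\log\log e/e$. You instead use dominated convergence in $e$: the same $\log\log e$ bound supplies a summable majorant, and for each fixed $e$ you show $\Sigma(N/e;e)\to 0$ as $N\to\infty$. Your trick for the pointwise limit is elegant --- replacing $f$ by $\tilde f(q)=f(\min(q,P_e))$ so that $p(de)$ collapses to $p(d)$, then applying Theorem~\ref{mainthmmuf} to $\tilde f$ with $\delta=f(P_e)$ (using that $P_e$-smooth numbers have density $0$) --- though one can also see $\Sigma(\infty;e)=0$ directly from Tao's Theorem~3, i.e.\ the identity (\ref{ypnt}) quoted in the paper, which gives $R(\infty,q)=0$ for every $q$.

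The net effect is that your argument bypasses Lemma~\ref{muupbd2} entirely, at the cost of giving no rate of convergence for the error $E(N)$; the paper's $S_2$ bound $\ll\exp(-c\sqrt{\log x})$ is, in principle, more quantitative (though its $S_3$ estimate is also only $o(1)$, so the overall gain is modest). One last remark on the ``main obstacle'' you flagged: the required uniform bound $\bigl|\sum_{d\le y,\,p(d)\ge P}\mu(d)/d\bigr|\le 1$, uniformly in both $y$ and $P$, is precisely the content of Tao's result cited as Lemma~\ref{muupbd1}, so that obstacle is already resolved in the literature and your plan goes through.
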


To prove Theorem \ref{mainthmf}, it suffices to show that the difference between the partial sums of (\ref{mainthmfmueq}) and (\ref{mainthmfeq}) is of size $o(1)$.  For this purpose, we need to estimate the following sum
$$R(x,y)=\sum_{\smat{1\leq n\leq x\\ p(n)> y}}\frac{\mu(n)}n, x,y\geq1.$$

\begin{lemma} \label{muupbd1}
	For any $x,y\geq1$, we have that
	\begin{equation}
		|R(x,y)|\leq 1.
	\end{equation}
\end{lemma}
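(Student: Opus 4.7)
The plan is to mimic Mertens' classical bound $|\sum_{n \le x} \mu(n)/n| \le 1$, but restricted to $y$-rough $n$. The engine will be a Möbius-inversion identity that converts the truncated Möbius sum weighted by $\lfloor x/n\rfloor$ into a count of $y$-smooth integers; separating integer and fractional parts then yields the desired inequality up to a careful accounting of $n=1$.

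Concretely, the first step is to establish
\[
\sum_{\substack{n \le x \\ p(n) > y}} \mu(n) \left\lfloor \frac{x}{n}\right\rfloor \;=\; \Psi(x,y),
\]
where $\Psi(x,y) := \#\{m \le x : m \text{ is } y\text{-smooth}\}$. This comes from writing $\lfloor x/n\rfloor = \#\{m \le x : n \mid m\}$, swapping the order of summation, and noting that for each $m$ with $y$-rough part $m''$, the inner sum $\sum_{n\mid m,\,p(n)>y}\mu(n) = \sum_{n\mid m''}\mu(n) = [m''=1]$ picks out precisely the $y$-smooth $m$. Separating $\lfloor x/n\rfloor = x/n - \{x/n\}$ then gives
\[
x\,R(x,y) \;=\; \Psi(x,y) \;+\; E(x,y),\qquad E(x,y) := \sum_{\substack{n \le x \\ p(n) > y}}\mu(n)\left\{\frac{x}{n}\right\}.
\]

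The proof reduces to showing $|\Psi(x,y) + E(x,y)| \le x$. I would isolate the $n=1$ term of $E$, which contributes exactly $\{x\}$, and bound each remaining term by $|\mu(n)\{x/n\}| < 1$; since only squarefree $y$-rough $n \ge 2$ appear, this yields $|E(x,y)| \le \{x\} + Q(x,y) - 1$, where $Q(x,y)$ is the number of squarefree $y$-rough integers in $[1,x]$. The key observation is then that the sets of $y$-smooth integers and squarefree $y$-rough integers in $[1,\lfloor x\rfloor]$ are disjoint apart from $n=1$, so $\Psi(x,y) + Q(x,y) \le \lfloor x\rfloor + 1$. Assembling,
\[
|x R(x,y)| \;\le\; \Psi(x,y) + \{x\} + Q(x,y) - 1 \;\le\; \lfloor x\rfloor + \{x\} \;=\; x.
\]

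The main obstacle is not any individual estimate but the tight bookkeeping required to arrive at the sharp constant $1$: the convention $p(1)=\infty$ places $n=1$ simultaneously on the smooth and rough sides, and the $-1$ coming from the overlap in $\Psi + Q \le \lfloor x\rfloor + 1$ must exactly absorb the $\{x\}$ from the $n=1$ term of $E$. Any less careful splitting of the error would only yield a bound of the form $1 + O(1/x)$.
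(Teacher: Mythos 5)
Your proof is correct. The paper itself gives no argument here---it simply invokes Theorem~1 of Tao's \emph{A remark on partial sums involving the M\"obius function}, which is exactly the inequality $|R(x,y)|\leq 1$ (for the set of primes $\leq y$). What you have written is, in effect, a self-contained reconstruction of the proof behind that citation: the hyperbola-type identity $\sum_{n\leq x,\,p(n)>y}\mu(n)\lfloor x/n\rfloor=\Psi(x,y)$, the split into integer and fractional parts, and the disjointness count $\Psi(x,y)+Q(x,y)\leq\lfloor x\rfloor+1$ (with overlap only at $n=1$) are precisely the ingredients of Tao's argument, which in turn generalizes the classical Mertens bound $|\sum_{n\leq x}\mu(n)/n|\leq 1$. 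The bookkeeping is sound: isolating the $n=1$ term of $E(x,y)$ to produce $\{x\}$, bounding the remaining terms by $Q(x,y)-1$, and matching against $\Psi+Q-1\leq\lfloor x\rfloor$ assembles exactly to $|xR(x,y)|\leq x$. So you have not found a genuinely different route, but you have correctly and completely supplied the proof that the paper outsources to the reference---which is arguably more informative than the one-line citation.
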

\begin{proof}
	This follows immediately by \cite[Theorem 1]{t10}.
\end{proof}

\begin{lemma}\label{muupbd2}
	For $ 1\leq y\leq e^{\sqrt{\log x}}$, we have that
	\begin{equation}\label{muupbd2eq}
		R(x,y)\ll \exp\big(-c(\log x)^{\frac12}\big).
	\end{equation}
	Here and thereafter mentioned $c$ is  a positive constant that may vary from one line to the next. The implied constant in (\ref{muupbd2eq}) is absolute.
\end{lemma}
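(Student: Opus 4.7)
The plan is to reduce $R(x,y)$ to the classical Mertens sum
\[
M_1(t) := \sum_{m \le t} \frac{\mu(m)}{m},
\]
for which the prime number theorem with the classical zero-free region of $\zeta$ gives $M_1(t) \ll \exp(-c\sqrt{\log t})$ for $t \ge 2$, and then to estimate the resulting series in two pieces. The first step is to establish the inversion formula
\[
R(x,y) \;=\; \sum_{\substack{n \le x \\ P(n) \le y}} \frac{M_1(x/n)}{n},
\]
where $n$ runs over all $y$-smooth positive integers (not necessarily squarefree) up to $x$. This is the summatory-function form of the Dirichlet-series factorization
\[
\sum_{(m,\,\prod_{p\le y}p)=1} \frac{\mu(m)}{m^s} \;=\; \frac{1}{\zeta(s)}\cdot \prod_{p \le y}(1-p^{-s})^{-1},
\]
which is obtained by writing each squarefree $m$ uniquely as its $y$-smooth part times its $y$-rough part.

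Next I would split the sum at the threshold $T = \sqrt{x}$. For $n \le T$ we have $x/n \ge \sqrt{x}$, so the PNT bound gives $|M_1(x/n)| \ll \exp(-c\sqrt{(\log x)/2})$; meanwhile Mertens' theorem bounds
\[
\sum_{\substack{n \le T \\ P(n) \le y}} \frac{1}{n} \;\le\; \prod_{p \le y}(1 - 1/p)^{-1} \;\ll\; \log y \;\le\; \sqrt{\log x}.
\]
So this range contributes $\ll \sqrt{\log x}\,\exp(-c'\sqrt{\log x}) \ll \exp(-c''\sqrt{\log x})$.

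For $n > T$ I would use the trivial bound $|M_1(x/n)| \le 1$ (which is Lemma \ref{muupbd1} with $y = 1$) together with Rankin's trick: for any $\sigma \in (0,1)$,
\[
\sum_{\substack{n > T \\ P(n) \le y}} \frac{1}{n} \;\le\; T^{-\sigma}\,\prod_{p \le y}(1 - p^{\sigma - 1})^{-1}.
\]
Choosing $\sigma = c_0/\sqrt{\log x}$ with $c_0$ a sufficiently small absolute constant, the hypothesis $y \le e^{\sqrt{\log x}}$ forces $p^\sigma \le y^\sigma \le e^{c_0}$ for every $p \le y$, hence $p^{\sigma - 1} \le e^{c_0}/p$. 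Applying the elementary inequality $-\log(1 - u) \le 2u$ (for $u \le 1/2$) together with Mertens' estimate $\sum_{p \le y} 1/p = \log\log y + O(1)$ shows that the Euler product is bounded by $(\log x)^A$ for an absolute $A$, while $T^{-\sigma} = \exp(-c_0\sqrt{\log x}/2)$; together this gives $\ll \exp(-c'\sqrt{\log x})$ for this range as well. Adding the two pieces yields (\ref{muupbd2eq}).

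The delicate point, and the main obstacle I anticipate, is the parameter tuning in the Rankin step: $\sigma$ must be small enough that $\prod_{p \le y}(1 - p^{\sigma - 1})^{-1}$ stays only polylogarithmic in $x$, yet large enough that $T^{-\sigma}$ produces an exponential saving of order $\exp(-c\sqrt{\log x})$. The assumption $y \le e^{\sqrt{\log x}}$ is precisely the threshold that allows the single choice $\sigma \asymp 1/\sqrt{\log x}$ to achieve both, and it is also the reason the lemma is restricted to this range of $y$.
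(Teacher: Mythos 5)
Your proof is correct, but it takes a genuinely different route from the paper. The paper quotes two nontrivial inputs that are already adapted to the ``$y$-rough'' setting: Alladi's estimate $\sum_{n\le x,\,p(n)>y}\mu(n)\ll x\exp(-c\sqrt{\log x})$ from his 1982 paper, and Tao's identity $\sum_{n\ge1,\,p(n)>y}\mu(n)/n=0$; it then converts $R(x,y)$ into the tail $-\sum_{n>x,\,p(n)>y}\mu(n)/n$ and finishes by partial summation against $M_y(t)=\sum_{n\le t,\,p(n)>y}\mu(n)$. You instead factor the $y$-rough squarefree condition multiplicatively, obtaining the convolution identity $R(x,y)=\sum_{n\le x,\,P(n)\le y}M_1(x/n)/n$, and then only need the classical Mertens-sum bound $M_1(t)\ll\exp(-c\sqrt{\log t})$ together with $|M_1(t)|\le1$, Mertens' product, and Rankin's trick on the $y$-smooth sum. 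This is more self-contained: it avoids both Alladi's unnormalized rough-integer estimate and Tao's vanishing result, trading them for the one-line Dirichlet factorization and a standard smooth-number bound, at the cost of a slightly more delicate parameter choice in the Rankin step. One small bookkeeping remark: with $\sigma=c_0/\sqrt{\log x}$ you get $p^{\sigma-1}\le e^{c_0}/p$, and for $p=2$ this is $e^{c_0}/2$, which is just barely above $1/2$ for any $c_0>0$; either take the elementary inequality in the form $-\log(1-u)\le Cu$ valid for $u\le2/3$, or treat the $p=2$ factor separately. This does not affect the outcome. Also note that the $y=1$ case is covered automatically by your identity (the smooth sum degenerates to $n=1$ and gives $R(x,1)=M_1(x)$), whereas the paper handles $y=1$ as a separate base case.
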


\begin{proof}
	If $y=1$, then $R(x,y)=\sum_{n\leq x}\frac{\mu(n)}{n}$. The estimate $\sum_{n\leq x}\frac{\mu(n)}{n}\ll  x\exp(-c\sqrt{\log x})$ is well known(e.g., \cite[(6.18)]{mv07}). 
	
	Suppose $ 2\leq y\leq e^{\sqrt{\log x}}$. Then by \cite[(3.5)]{a82}, we have 
	\begin{equation}\label{mxy0}
		\sum_{\smat{1\leq n\leq x\\ p(n)> y}}\mu(n)\ll x\log x\exp\of{-\frac\alpha2\log\of{\frac\alpha3}}+x\exp\of{-c\sqrt{\log x}},
	\end{equation}
	where $\alpha=\frac{\log x}{\log y}$. Note that $\alpha\geq \sqrt{\log x}$
	for $ 2\leq y\leq e^{\sqrt{\log x}}$. For the first term on the right hand side of (\ref{mxy0}), we have $$x\log x\exp\of{-\frac\alpha2\log\of{\frac\alpha3}}\ll x\log x\exp\of{-2\sqrt{\log x}}\ll x\exp\of{-\sqrt{\log x}}.$$
	So (\ref{mxy0}) gives us that
	\begin{equation}\label{mxy1}
		\sum_{\smat{1\leq n\leq x\\ p(n)> y}}\mu(n)\ll x\exp\of{-c\sqrt{\log x}}.
	\end{equation}
	
	On the other hand, by \cite[Theorem 3]{t10}, we have 
	\begin{equation}\label{ypnt}
		\sum_{\smat{n\geq1\\ p(n)>y}}\frac{\mu(n)}n=0.	
	\end{equation}
	
	Finally, we show (\ref{muupbd2eq}) by using (\ref{mxy1}), (\ref{ypnt}), and summation by parts. Let $y$ be fixed. Put $M_y(x)=\sum_{\smat{1\leq n\leq x\\ p(n)> y}}\mu(n)$. First, from (\ref{ypnt}) we have $R(x,y)=-\sum_{\smat{n> x\\ p(n)>y}}\frac{\mu(n)}n$. Then using summation by parts and (\ref{mxy1}), we get that
	\begin{align*}
		R(x,y)&=-\int_x^\infty \frac{dM_y(t)}{t}=-\left.\frac{M_y(t)}{t}\right|_x^\infty-\int_x^\infty \frac{M_y(t)}{t^2}dt\\
		&=\frac{M_y(x)}{x}-\int_x^\infty \frac{M_y(t)}{t^2}dt\\
		&\ll \exp\of{-c\sqrt{\log x}}+\int_x^\infty\frac{\exp\of{-c\sqrt{\log t}}}{t}dt\\
		&\ll \exp\of{-c\sqrt{\log x}}+ \exp\of{-\frac{c}2\sqrt{\log x}}\cdot\int_x^\infty\frac{\exp\of{-\frac{c}2\sqrt{\log t}}}{t}dt\\
		&\ll \exp\of{-\frac{c}2\sqrt{\log x}},
	\end{align*}
	where the integral $\int_x^\infty\frac{\exp\of{-\frac{c}2\sqrt{\log t}}}{t}dt<\int_2^\infty\frac{\exp\of{-\frac{c}2\sqrt{\log t}}}{t}dt<\infty$ is bounded by a constant. Thus, we get the desired estimate (\ref{muupbd2eq}).
\end{proof}

\paragraph{Proof of Theorem \ref{mainthmf}}
Set   $f(\infty)=0$ for convenience. First, we break up the partial sum of (\ref{mainthmfeq}) into three sums:
\begin{align}
	\sum_{n\leq x}\frac{\mu*a(n)}{n}f(p(n))&=\sum_{n\leq x}\frac{f(p(n))}{n}\sum_{d|n}\mu(d)a\big(\frac{n}{d}\big)=\sum_{d\leq x}\frac{a(d)}{d}\sum_{n\leq \frac{x}{d}}\frac{\mu(n)}{n}f(p(dn))\nonumber\\
	&=\sum_{ n\leq x}\frac{\mu(n)}{n}f(p(n))+\sum_{2\leq d\leq x^{\frac1\alpha}}\frac{a(d)}{d}\sum_{n\leq \frac{x}{d}}\frac{\mu(n)}{n}f(p(dn))\nonumber\\
	&\qquad+\sum_{x^{\frac1\alpha}<d\leq x}\frac{a(d)}{d}\sum_{n\leq \frac{x}{d}}\frac{\mu(n)}{n}f(p(dn))\nonumber\\
	&=S_1+S_2+S_3, \label{mainpfeq}
\end{align}
where $\alpha$ is to be determined later.

Next, we show that $S_2$ and $S_3$ are error terms of size $o(1)$. For the inside sum  in $S_2$ and $S_3$, we  write it as a sum of  $R(x,y)$'s:
\begin{align} 
	\sum_{ n\leq \frac{x}{d}}\frac{\mu(n)}{n}f(p(dn))&=\sum_{\smat{ n\leq \frac{x}{d}\\ p(n)\geq p(d)}}\frac{\mu(n)}{n}f(p(d))+\sum_{\smat{n\leq \frac{x}{d}\\ p(n)< p(d)}}\frac{\mu(n)}{n}f(p(n))\nonumber\\
	&=f(p(d))\sum_{\smat{n\leq \frac{x}{d}\\ p(n)\geq p(d)}}\frac{\mu(n)}{n}+\sum_{p<p(d)}f(p)\sum_{\smat{ n\leq \frac{x}{d}\\ p(n)=p}}\frac{\mu(n)}{n}\nonumber\\
	&=f(p(d))\sum_{\smat{ n\leq \frac{x}{d}\\ p(n)\geq p(d)}}\frac{\mu(n)}{n}-\sum_{p<p(d)}\frac{f(p)}{p}\sum_{\smat{n\leq \frac{x}{pd}\\ p(n)>p}}\frac{\mu(n)}{n}\nonumber\\
	&=f(p(d)){R\Big(\frac{x}{d}, p(d)-1\Big)}-\sum_{p<p(d)}\frac{f(p)}{p}{R\Big(\frac{x}{pd},p\Big)}. \label{pfeq2}
\end{align}

Since $f$ is bounded, from (\ref{pfeq2}) we obtain that
\begin{equation}\label{pfinside}
	\sum_{ n\leq \frac{x}{d}}\frac{\mu(n)}{n}f(p(dn))\ll \abs{R\of{\frac{x}{d}, p(d)-1}}+\sum_{p<p(d)}p^{-1} \abs{R\of{\frac{x}{pd},p}}.
\end{equation}
We will use the estimates of $R(x,y)$ to estimate (\ref{pfinside}). 

To apply Lemma \ref{muupbd2} and require $x^{\frac1\alpha}\to\infty$ as $x\to\infty$, we take $\alpha=(\log x)^{\frac34}$. 

For $S_2$, we have $2\leq d\leq x^{\frac1\alpha}$. Suppose $x$ is sufficiently large. Then for $p<d\leq x^{\frac1\alpha}$, we have $\frac{x}{d}\geq\frac{x}{pd}\geq \frac{x}{d^2} \geq x^{1-\frac2\alpha}\geq  x^{\frac12}$. Then $ e^{\sqrt{\log\frac{x}{pd}}}\geq  e^{\sqrt{\frac12\log x}}\geq e^{(\log x)^{\frac14}}=x^{\frac1\alpha}>p$. So we have ${x}/{d}\geq x^{\frac12}$  and $p\leq e^{\sqrt{\log\frac{x}{pd}}}$ for all $p<d\leq x^{\frac1\alpha}$.  By Lemma \ref{muupbd2}, we get that
\begin{align}
	&\quad \abs{R\of{\frac{x}{d}, p(d)-1}}+\sum_{p<p(d)}p^{-1} \abs{R\of{\frac{x}{pd},p}}\nonumber\\
	&\ll \exp\of{-c(\log \frac{x}{d})^{\frac12}}
	+\sum_{p<p(d)}p^{-1}\exp\Big(-c(\log \frac{x}{pd})^{\frac12}\Big)\nonumber\\
	&\ll \exp\of{-c(\log x^{\frac12})^{\frac12}}+\sum_{p<p(d)}p^{-1}\exp\Big(-c(\log \frac{x^{\frac12}}{p})^{\frac12}\Big)\nonumber\\	
	&\ll \exp\of{-c(\log x)^{\frac12}}+\sum_{n<x^{\frac14}}\frac1n\exp\Big(-c(\log \frac{x^{\frac12}}{n})^{\frac12}\Big)\nonumber\\
	&\ll \exp\of{-c(\log x)^{\frac12}}. \label{s2-1}
\end{align}
Here for the sum in the third line of (\ref{s2-1}), we extended the index set $\set{p: p<p(d)}$ of primes to the set $\{n: n<x^{\frac14}\}$ of integers by the fact that $p(d)\leq d\leq x^{\frac1\alpha}\leq x^{\frac14}$. And the last estimate for the sum over $n<x^{\frac14}$  follows by \cite[(2.36)]{a77}.

Hence, it follows by (\ref{pfinside})  and (\ref{s2-1}) that
\begin{align}
	S_2&\ll \sum_{2\leq d\leq x^{\frac1\alpha}}\frac{|a(d)|}{d} \exp\of{-c(\log x)^{\frac12}}\nonumber\\
	&\ll \Big(\sum_{d=2}^\infty\frac{|a(d)|}{d}\Big) \exp\of{-c(\log x)^{\frac12}}\nonumber\\
	&\ll \exp\of{-c(\log x)^{\frac12}},  \label{mainpfeqs2}
\end{align}
where $\sum_{d=2}^\infty\frac{|a(d)|}{d}<\infty$ is a positive constant due to the assumptions on $a$.

For $S_3$, by Lemma \ref{muupbd1}, we have 
\begin{equation}\label{pfs3-2}
	\abs{R\of{\frac{x}{d}, p(d)-1}}+\sum_{p<p(d)}p^{-1} \abs{R\of{\frac{x}{pd},p}} \leq 1+\sum_{p<p(d)}\frac{1}{p}\ll\log\log d.
\end{equation}
Then plugging (\ref{pfinside}) and  (\ref{pfs3-2})  into $S_3$, we get that 
\begin{equation}\label{mainpfeqs3}
	S_3=O\Big(\sum_{x^{\frac1\alpha}<d\leq x}\frac{|a(d)|}{d}\log\log d\Big)=o(1)
\end{equation}
due to the convergence of $\sum_{d=2}^\infty\frac{|a(d)|}{d}\log\log d$.

Thus, combining (\ref{mainpfeq}),  (\ref{mainpfeqs2})  and  (\ref{mainpfeqs3}) together, we conclude that
\begin{equation}\label{maineq}
	\sum_{n\leq x}\frac{\mu*a(n)}{n}f(p(n))=	\sum_{n\leq x}\frac{\mu(n)}{n}f(p(n))+o(1).
\end{equation}
And Theorem \ref{mainthmf} follows immediately by Theorem \ref{mainthmmuf} and (\ref{maineq}) above. \qed

\section{Proof of Theorem \ref{mainthm}}
Now, we use the following theorem on the  density of the largest prime divisors of integers to derive the desired formula (\ref{mainthmeq}) via Theorem \ref{mainthmf}. 

\begin{theorem}[{\cite[Theorem 3.1]{kms19}}]\label{Pndensity}
	If $S\subseteq \cP$ has a natural density $\delta(S)$, then
	\begin{equation}
		\sum_{\smat{n\leq x\\P(n)\in S}} 1\sim\delta(S)\cdot x.
	\end{equation}
\end{theorem}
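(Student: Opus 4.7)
The plan is to partition the integers $n \leq x$ according to their largest prime factor. Writing each $n$ uniquely as $n = P(n) \cdot m$ with $m \leq x/P(n)$ and $P(m) \leq P(n)$, and setting $\Psi(y, z) := \#\set{m \leq y : P(m) \leq z}$, one obtains the identity
\[
\#\set{n \leq x : P(n) \in S} = \sum_{\smat{p \in S\\ p \leq x}} \Psi(x/p, p).
\]
The goal is then to show that the weights $\Psi(x/p, p)$ are smooth enough in $p$ that the natural density of $S$ inside $\cP$ transfers to the density of $\set{n \in \N : P(n) \in S}$ inside $\N$.

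The first step is to isolate the contribution of large primes $p > \sqrt{x}$. When $p > \sqrt{x}$, every $m \leq x/p$ is automatically less than $p$, so $\Psi(x/p, p) = \lfloor x/p \rfloor$. Applying Abel summation to the hypothesis $\pi_S(t) \sim \delta(S) \cdot t/\log t$, this portion contributes
\[
x \sum_{\smat{p \in S\\ \sqrt{x} < p \leq x}} \frac1p + O(\pi(x)) \sim \delta(S)\, x \log 2.
\]
For the remaining primes $p \leq \sqrt{x}$, I would invoke the Dickman--de Bruijn asymptotic $\Psi(y, y^{1/u}) \sim \rho(u)\, y$. Substituting $u = \log x / \log p$ gives $\Psi(x/p, p) \sim (x/p)\,\rho(u-1)$, and a second round of Abel summation with $\pi_S$, followed by the change of variables $u = \log x/\log t$, transforms the remaining sum into $\delta(S)\, x \int_1^\infty \rho(v)/(v+1)\, dv = \delta(S)\, x (1 - \log 2)$. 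Adding the two contributions yields $\delta(S) \cdot x$, thanks to the classical identity $\int_0^\infty \rho(v)/(v+1)\, dv = 1$, which says precisely that $\log P(n)/\log n$ has a probability density on $(0, 1]$ in the limit.

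The main obstacle is ensuring uniformity of the Dickman asymptotic across the full range $u \in [1, \log x/\log 2]$. Fortunately, since $\rho(u)$ decays faster than any negative power of $u$, one may truncate the sum at $p \leq x^{1/u_0}$ for any large fixed $u_0$, with an error that becomes $o(x)$ as $u_0 \to \infty$; within any bounded range of $u$, Hildebrand's uniform estimate $\Psi(y, y^{1/u}) = y \rho(u) \bigl(1 + O(1/\log y)\bigr)$ is ample to justify the partial summation. The natural-density hypothesis enters only through $\pi_S(t) \sim \delta(S) \pi(t)$, which is exactly the input needed to carry out the Abel summation in each of the two regimes.
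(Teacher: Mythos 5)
The paper does not prove this theorem; it simply cites \cite[Theorem 3.1]{kms19} and notes that the statement here is the case $K=\Q$. Your proof is a correct standalone argument and follows the standard route one would expect (and which the cited reference essentially takes for $K=\Q$): partition $n\leq x$ by $P(n)$, so that $\#\{n\leq x: P(n)\in S\}=\sum_{p\in S,\,p\leq x}\Psi(x/p,p)$; handle $p>\sqrt{x}$ directly since then $\Psi(x/p,p)=\lfloor x/p\rfloor$; on $x^{1/u_0}\leq p\leq\sqrt{x}$ use the uniform Dickman asymptotic $\Psi(x/p,p)\sim (x/p)\rho(u-1)$ with $u=\log x/\log p$; and feed the natural-density hypothesis $\pi_S(t)\sim\delta(S)\pi(t)$ into both partial summations, obtaining $\delta(S)\,x\log 2$ and $\delta(S)\,x\int_1^{u_0-1}\rho(v)/(v+1)\,dv$ respectively, with the truncation tail bounded by $\Psi(x,x^{1/u_0})\ll\rho(u_0)x$. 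Letting $u_0\to\infty$ and using $\int_0^\infty\rho(v)/(v+1)\,dv=1$ (which is just the statement that $\log n/\log P(n)$ has limiting density $\rho(u-1)/u$ on $[1,\infty)$) gives $\delta(S)\,x$ as required. The details you leave implicit (uniformity of Hildebrand's estimate on a bounded $u$-range, the diagonal argument in $u_0$) are all routine and fill in correctly.
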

\begin{remark}
	Theorem \ref{Pndensity} is the statement of \cite[Theorem 3.1]{kms19} for $K=\Q$.
\end{remark}

\paragraph{Proof of Theorem \ref{mainthm}.}
Let $f(n)$ be the characteristic function  of $S$ defined by
$$f(n)=\begin{cases}
1,& \text{if } n\in S;\\
0,& \text{if } n\notin S.
\end{cases}
$$
Then we can rewrite Theorem \ref{Pndensity} as 
$$\sum_{n\leq x}f(P(n))\sim \delta(S)\cdot x.$$

By Theorem \ref{mainthmf}, we have
$$-\sum_{n=2}^\infty \frac{\mu*a(n)f(p(n))}{n}= \delta(S),$$
which is  exactly (\ref{mainthmeq}).  This completes the proof of Theorem \ref{mainthm}.
\qed

\section{Proof of Corollary \ref{maincor}}

In section \ref{secdensity}, we derived (\ref{maincoreq1}) and  (\ref{maincoreq2}) from (\ref{maincoreq}). In this section, we apply Theorem \ref{mainthm} to show (\ref{maincoreq}). 

Let $b(n)=\sum_{d|n}\mu*a(d)\frac{d}{\varphi(d)}$. Then by the M\"{o}bius inversion formula, we have
\begin{equation}
	\frac{\mu*a(n)}{\varphi(n)}=\frac{\mu*b(n)}{n}.
\end{equation}
Clearly, $b(1)=1$. By Theorem \ref{mainthm}, to prove (\ref{maincoreq}), it suffices to show that
\begin{equation}\label{bnbdeq}
	\sum_{n=1}^\infty \frac{|b(n)|}{n}\log\log n<\infty.
\end{equation}

First, by $\mu*a(d)=\sum_{d_1d_2=d}\mu(d_1)a(d_2)$ we have $$b(n)=\sum_{d|n}\sum_{d_1d_2=d}\mu(d_1)a(d_2)\frac{d}{\varphi(d)}=\sum_{d_1d_2|n}\mu(d_1)a(d_2)\frac{d_1d_2}{\varphi(d_1d_2)}.$$ By $\varphi(de)=\varphi(d)\varphi(e)\frac{(d,e)}{\varphi((d,e))}$, we can rewrite $b(n)$ as
\begin{align}
	b(n)&=\sum_{de|n}\frac{de}{\varphi(de)}\mu(d)a(e)=\sum_{e|n}ea(e)\sum_{d|\frac{n}{e}}\frac{d\mu(d)}{\varphi(de)}\nonumber\\
	&=\sum_{e|n}\frac{e}{\varphi(e)}a(e)\sum_{d|\frac{n}{e}}\frac{d\varphi((d,e))}{\varphi(d) (d,e)}\mu(d). \label{phieq1}
\end{align}

Put $m={n}/{e}$. Notice that $\frac{d\varphi((d,e))}{\varphi(d) (d,e)}=\prod\limits_{p|d, p\nmid e}(1-p^{-1})^{-1}$ for $\varphi(n)=n\prod\limits_{p|n}(1-p^{-1})$. So we can simplify the sum over $d$ as follows:
\begin{align}
	\sum_{d|m}\frac{d\varphi((d,e))}{\varphi(d) (d,e)}\mu(d)&=\sum_{d|m}\mu(d)\prod_{p|d, p\nmid e}\big(1-p^{-1}\big)^{-1}\nonumber\\
	&=\sum_{\smat{d_1|(e,m), d_2|m, d_2\nmid e\\(d_1,d_2)=1}}\mu(d_1)\mu(d_2)\prod_{p|d_2}\big(1-p^{-1}\big)^{-1}\nonumber\\
	&=\sum_{d_1|(e,m)}\mu(d_1)\sum_{\smat{d_2|m, d_2\nmid e\\(d_1,d_2)=1}}\mu(d_2)\prod_{p|d_2}\big(1-p^{-1})^{-1}\nonumber\\
	&=\sum_{d_1|(e,m)}\mu(d_1)\prod_{p|m,p\nmid e}\big(1-\big(1-p^{-1}\big)^{-1}\big)\nonumber\\
	&=\delta_{1,(e,m)}\prod_{p|m, p\nmid e}\big(1-\big(1-p^{-1}\big)^{-1}\big), \label{phieq2}
\end{align}
where $\delta_{1,n}$ is the Kronecker delta  defined by $\delta_{1,1}=1$ if $n=1$ and zero otherwise.

Plugging (\ref{phieq2}) into (\ref{phieq1}), we have
\begin{equation}
	b(n)=\sum_{e|n, (e,\frac{n}{e})=1}\frac{e}{\varphi(e)}a(e)\prod_{p|\frac{n}{e}}\frac{1}{1-p}.
\end{equation}
It follows by the triangle inequality that
\begin{equation}\label{bnbd-1}
	|b(n)|\leq \sum_{e|n, (e,\frac{n}{e})=1}|a(e)|\frac{e}{\varphi(e)}\prod_{p|\frac{n}{e}}{\frac{1}{p-1}}.
\end{equation}

Notice that by the assumptions, we have $|a(n)|\ll{n^{-\alpha}}, \alpha>0$,  and by \cite[Theorem 327]{hw08}, we have $\frac{n}{\varphi(n)}\ll n^{\frac{\alpha}{2}}$. So from (\ref{bnbd-1}), we get the following estimate for $b_n$:
\begin{equation}\label{bnupbd}
	|b(n)|\ll \sum_{d|n}{d^{-\frac{\alpha}{2}}} \prod_{p|\frac{n}{d}}{\frac{1}{p-1}}.
\end{equation}

Put $c(n)=\sum_{d|n}{d^{-\frac{\alpha}{2}}} \prod_{p|\frac{n}{d}}{\frac{1}{p-1}}$. Then $c(n)$ is the Dirichlet convolution of $c_1(n)=n^{-\frac\alpha2}$ and $c_2(n)=\prod_{p|n}\frac1{p-1}$. Clearly, the Dirichlet series of $c_1(n)$ is $\zeta(s+\alpha/2)$, which is absolutely convergent on $\Re s>1-\alpha/2$. As regards the multiplicative function $c_2(n)$,  we have
$$\sum_{p}\sum_{\nu\geq1}\abs{\frac{c_2(p^\nu)}{p^{\nu s}}}=\sum_{p}\frac1{(p-1)(p^\sigma-1)}<\infty$$
for $\sigma=\Re s>0$.
By \cite[Theorem 1.3, \S II.1]{t15}, it follows that the Dirichlet series of $c_2(n)$ is absolutely convergent on $\Re s>0$ and $\sum_{n=1}^\infty \frac{c_2(n)}{n^s}=\prod_{p}(1+\frac1{(p-1)(p^s-1)})$.
Thus, by \cite[Theorem 1.8]{mv07}, the Dirichlet series of $c(n)$ is absolutely convergent on $\Re s>\sigma_0$, where $\sigma_0=\max\set{1-\alpha/2, 0}<1$, and 
\begin{equation}
	\sum_{n=1}^\infty \frac{c(n)}{n^s}=\zeta(s+\frac\alpha2)\prod_{p}\Big(1+\frac1{(p-1)(p^s-1)}\Big).
\end{equation}
Therefore, by \cite[(1.8)]{mv07}, the derivative of $\sum_{n=1}^\infty {c(n)}{n^{-s}}$ is convergent at $s=1$, which implies that $\sum_{n=1}^\infty \frac{c(n)}{n}\log n<\infty$. It follows immediately by (\ref{bnupbd})  that
\begin{equation}\label{bnconv}
	\sum_{n=1}^\infty \frac{|b(n)|}{n}\log n<\infty.
\end{equation}
Hence, by (\ref{bnconv}) we finally obtain (\ref{bnbdeq})  and the conclusion (\ref{maincoreq}). This completes the proof of Corollary \ref{maincor}.
\qed

\section{Examples}
At the end of this note, we give some examples that are analogous to (\ref{maincoreq}) or (\ref{phimueq}) for the $\varphi(n)$ by  replaced by functions ``near $n$''.  We may call them ``mock $n$-functions''.

\begin{example}
	Similar to the proof above, under the assumptions of Corollary \ref{maincor}, we have 
	\begin{equation}\label{psialldi}
		-\sum_{\smat{n\geq 2\\ p(n)\in S}}\frac{\mu*a(n)}{\psi(n)}=\delta(S)
	\end{equation}
	for the Dedekind psi function $\psi(n)=n\prod_{p|n}(1+p^{-1})$. 
\end{example}	

In the following examples, we focus on the M\"{o}bius function $\mu(n)$, leaving the investigation of the case $\mu*a(n)$ to interested readers.

\begin{example}
	Let $\sigma(n)=\sum_{d|n}d$ be the sum-of-divisors  function. Then $\sigma(n)=\psi(n)$, if  $n$ is square-free. Thus, by (\ref{psialldi}) we have 
	\begin{equation}\label{sigmaalladi}
		-\sum_{\smat{n\geq 2\\ p(n)\in S}}\frac{\mu(n)}{\sigma(n)}=\delta(S).
	\end{equation}
	
	In general, if $g(n)$ is a multiplicative function satisfying ${g(p)}/p-1\ll p^{-t}$ for some $t>0$, then $-\sum_{\smat{n\geq 2\\ p(n)\in S}}\frac{\mu(n)}{g(n)}=\delta(S)$ by Theorem \ref{mainthm}. In particular, (\ref{sigmaalladi}) also holds for the  $\sigma(n)$ replaced by the multiplicative function $\frac{\sigma_k(n)}{n^{k-1}}$ for any integer $k\geq 1$.
\end{example}

\begin{example}
	For integer $k\geq 1$, let $r_k(n)=\#\{(m_1,\dots,m_k)\in\Z^k| n=m_1^2+\cdots+m_k^2\}$ be the number of representations of $n$ as the sum of $k$ squares.  Note that $\frac{r_{k}(n)}{2k}$ is multiplicative if and only if $k=1,2,4,8$, see \cite[Theorem 10.3.4]{g85}. By \cite[(9.19)]{g85}, we have $r_4(n)=8\sum_{d|n,4\nmid d}d$ and $r_8(n)=16\sum_{d|n}(-1)^{n-d}d^3$.  Thus,  (\ref{sigmaalladi}) also holds for the  $\sigma(n)$ replaced by $\frac{r_4(n)}8$ or $\frac{r_8(n)}{16n^2}$. 
	
\end{example}	

\begin{example}
	Notice that $\sigma_{2k-1}(n)$ lies in the Fourier expansion of the normalized Eisenstein series
	$$E_{2k}(\tau)=1-\frac{4k}{B_{2k}}\sum_{n=1}^\infty \sigma_{2k-1}(n)q^n, $$
	which is closely related to the theta functions of a lattice. 
	Here $q=e^{2\pi i\tau}$, and $B_{2k}$ are the Bernoulli numbers. 
	The theta function $\Theta_\Gamma(\tau)$ associated to a lattice $\Gamma$ is defined by
	\begin{equation}
		\Theta_\Gamma(\tau)=\sum_{x\in \Gamma}e^{i\pi \tau\norm{x}^2}, \Im \tau>0. 
	\end{equation}
	
	Suppose the Fourier expansion of $	\Theta_\Gamma(\tau)$ is 
	\begin{equation}
		\Theta_\Gamma(\tau)=1+\sum_{n=1}^\infty r_{\Gamma}(n)q^n.
	\end{equation}
	Then by \cite[\S VII6.6]{s73}, for the $E_8$ lattice $\Gamma_8$, we have  $\Theta_{\Gamma_8}(\tau)=E_4(\tau)$; and for the lattice $\Gamma=\Gamma_8\oplus\Gamma_8$ or $\Gamma_{16}$, we have $\Theta_{\Gamma}(\tau)=E_8(\tau)$. Thus, we conclude the following statement which connects the $E_8$ lattice and densities of sets of primes.
	\begin{proposition} Let $k=\dim\Gamma/4$. For $\Gamma=\Gamma_8$, $\Gamma_8\oplus\Gamma_8$, or $\Gamma_{16}$, if $S$ has a natural density, then 
		\begin{equation}\label{theta}
			\frac{4k}{B_{2k}}\sum_{\smat{n\geq 2\\ p(n)\in S}}\frac{\mu(n)n^{2k-2}}{r_\Gamma(n)}=\delta(S).
		\end{equation}
	\end{proposition}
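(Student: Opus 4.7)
The plan is to bridge (\ref{theta}) to the observation made in the preceding example, namely that for any multiplicative $g$ satisfying $g(p)/p-1\ll p^{-t}$ for some $t>0$ one has $-\sum_{\smat{n\geq 2\\ p(n)\in S}}\mu(n)/g(n)=\delta(S)$. The bridge is the cited identity $\Theta_\Gamma=E_{2k}$ of Serre, which identifies $r_\Gamma$ with $\sigma_{2k-1}$ up to an explicit constant.

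Concretely, I would first compare Fourier coefficients of $\Theta_\Gamma(\tau)=1+\sum_{n\geq 1}r_\Gamma(n)q^n$ and $E_{2k}(\tau)=1-(4k/B_{2k})\sum_{n\geq 1}\sigma_{2k-1}(n)q^n$, obtaining $r_\Gamma(n)=-(4k/B_{2k})\sigma_{2k-1}(n)$ for every $n\geq 1$. Inserting this into the left side of (\ref{theta}), the constants cancel and (\ref{theta}) reduces to
\begin{equation*}
	-\sum_{\smat{n\geq 2\\ p(n)\in S}}\frac{\mu(n)\,n^{2k-2}}{\sigma_{2k-1}(n)}=\delta(S).
\end{equation*}
I would then set $g(n)=\sigma_{2k-1}(n)/n^{2k-2}$, which is multiplicative since $\sigma_{2k-1}$ is, and compute $g(p)/p-1=p^{-(2k-1)}$, so the hypothesis of the cited general statement is satisfied with $t=2k-1\geq 3$ (here $k\in\{2,4\}$). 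Applying that statement finishes the proof.

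The main obstacle is purely algebraic: one must carefully match signs and Bernoulli-number constants, as both $B_4$ and $B_8$ equal $-1/30$ and the factor $4k/B_{2k}$ in (\ref{theta}) is tuned precisely to absorb the normalization of $E_{2k}$. No new analytic input beyond Theorem \ref{mainthm} is required, because the three lattices $\Gamma_8$, $\Gamma_8\oplus\Gamma_8$, $\Gamma_{16}$ enter the argument only through Serre's theta-Eisenstein identity; in particular, the fact that $\Gamma_8\oplus\Gamma_8$ and $\Gamma_{16}$ have the same theta series is precisely what makes a single uniform statement in terms of $r_\Gamma$ possible.
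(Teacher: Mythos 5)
Your proposal is correct and follows essentially the same route as the paper: use Serre's identities $\Theta_{\Gamma_8}=E_4$ and $\Theta_{\Gamma_8\oplus\Gamma_8}=\Theta_{\Gamma_{16}}=E_8$ to get $r_\Gamma(n)=-\tfrac{4k}{B_{2k}}\sigma_{2k-1}(n)$, cancel the constant, and reduce to the formula $-\sum_{\smat{n\geq 2\\ p(n)\in S}}\mu(n)/g(n)=\delta(S)$ for $g(n)=\sigma_{2k-1}(n)/n^{2k-2}$, which is exactly the multiplicative function with $g(p)/p-1=p^{-(2k-1)}$ covered by the preceding example. The computation of signs, Bernoulli constants, and the exponent $t=2k-1$ all check out.
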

\end{example}

\begin{remark}
	It is unclear if (\ref{theta}) holds for other lattices, say the Leech lattice $\Lambda_{24}$.
\end{remark}

\section{Acknowledgements}
The author would like to thank his advisor Professor Xiaoqing Li for her constant support, and Professor Hui Xue, Ze Xu, and Bingrong Huang for their comments, and Michael Kural, Vaughan McDonald, and Ashwin Sah for bringing their article \cite{kms19} to his attention. The author would also like to thank the anonymous referee  for a careful reading of the paper and helpful corrections and suggestions.

	\

	Department of Mathematics, University at Buffalo, Buffalo, NY 14260, USA
	
	Email: bwang32@buffalo.edu

\end{document}